\documentclass{amsart}

\usepackage[utf8]{inputenc} 
\usepackage[T1]{fontenc} 
\usepackage[english]{babel}

\theoremstyle{plain} 
\newtheorem{theorem}{Theorem} 
\newtheorem{lemma}[theorem]{Lemma}

\theoremstyle{remark} 
\newtheorem{conjecture}{Conjecture}

\DeclareMathOperator{\mre}{Re}

\begin{document} 
\title{Coefficient estimates for $H^p$ spaces with $0<p<1$} 
\date{\today} 

\author{Ole Fredrik Brevig} 
\address{Department of Mathematical Sciences, Norwegian University of Science and Technology (NTNU), NO-7491 Trondheim, Norway} 
\email{ole.brevig@math.ntnu.no}

\author{Eero Saksman} 
\address{Department of Mathematical Sciences, Norwegian University of Science and Technology (NTNU), NO-7491 Trondheim, Norway}
\address{Department of Mathematics and Statistics, University of Helsinki, FI-00170 Helsinki, Finland}
\email{eero.saksman@helsinki.fi} 
\begin{abstract}
	Let $C(k,p)$ denote the smallest real number such that the estimate $|a_k|\leq C(k,p)\|f\|_{H^p}$ holds for every $f(z)=\sum_{n\geq0}a_n z^n$ in the $H^p$ space of the unit disc. We compute $C(2,p)$ for $0<p<1$ and $C(3,2/3)$, and identify the functions attaining equality in the estimate.
\end{abstract}

\subjclass[2010]{Primary 30H10. Secondary 42A05.}

\thanks{Some of the work on the present paper was carried out during the workshop ``Operator related Function Theory'' at the Erwin Schr\"odinger Institute. The authors gratefully acknowledge the support of the ESI}

\maketitle

\section{Introduction} For $0<p<\infty$, the Hardy space $H^p$ is comprised of the analytic functions $f$ in the unit disc $\mathbb{D} = \{z\in\mathbb{C}\,:\,|z|<1\}$ which satisfy
\[\|f\|_{H^p}^p = \lim_{r \to 1^-} \int_0^{2\pi} |f(re^{i\theta})|^p \, \frac{d\theta}{2\pi} < \infty.\]
The Hardy space $H^p$ is a Banach space when $1\leq p < \infty$ and a quasi-Banach space when $0<p<1$. For an integer $k\geq1$, let $C(k,p)$ denote the smallest real number such that
\[|a_k| \leq C(k,p)\|f\|_{H^p}\]
holds for every $f(z) = \sum_{n\geq0} a_n z^n$ in $H^p$. In other words, $C(k,p)$ is the norm of the bounded linear functional $L_k(f) = a_k$ on $H^p$.

In the range $1\leq p < \infty$ it follows readily from the triangle inequality and H\"older's inequality that $C(k,p)=1$ for every $k\geq1$. Estimates for $C(k,p)$ when $0<p<1$ were first obtained by Hardy and Littlewood \cite{HL32}, who proved that there is a constant $C_p\geq1$ such that $C(k,p) \leq C_p k^{1/p-1}$ holds for every $k\geq1$. 

In this paper we are interested in computing $C(k,p)$ explicitly in the non-trivial range $0<p<1$. For this purpose it is fruitful to express this quantity via the associated linear extremal problem 
\begin{equation}\label{eq:extremal} 
	C(k,p) = \sup\left\{\mre{\frac{f^{(k)}(0)}{k!}}\,:\, \|f\|_{H^p}=1\right\}. 
\end{equation}
A normal family argument implies that there are functions $f$ in the unit ball of $H^p$ attaining the supremum \eqref{eq:extremal}. In a recent joint paper with Bondarenko and Seip \cite{BBSS}, we proved that the extremal function for $k=1$ in \eqref{eq:extremal} is given by 
\begin{equation}\label{eq:C1pextremal} 
	f(z) = \left(1-\frac{p}{2}\right)^{\frac{1}{p}}\left(1 + \sqrt{\frac{p}{2-p}} z\right)^{\frac{2}{p}}, 
\end{equation}
up to rotations $f(z) \mapsto e^{-i\theta}f(e^{i\theta}z)$. Consequently, we found that 
\begin{equation}\label{eq:C1p} 
	C(1,p) = \sqrt{\frac{2}{p}}\left(1-\frac{p}{2}\right)^{\frac{1}{p}-\frac{1}{2}}. 
\end{equation}
The approach used in \cite{BBSS} is to write $f$ in the unit ball of $H^p$ as $f=g h^{2/p-1}$, where $g$ and $h$ are in the unit ball of $H^2$ and $h$ does not vanish in $\mathbb{D}$. If the coefficient sequences of $g$ and $h^{2/p-1}$ are $(b_n)_{n\geq0}$ and $(c_n)_{n\geq0}$, respectively, then 
\begin{equation}\label{eq:cauchyproduct} 
	\frac{f^{k}(0)}{k!} = \sum_{j=0}^k b_j c_{k-j}. 
\end{equation}
For any fixed non-vanishing $h$ in the unit ball of $H^2$, it is now easy to find the optimal $g$ in the unit ball of $H^2$ to maximize \eqref{eq:cauchyproduct} by the Cauchy--Schwarz inequality. This translates the linear extremal problem \eqref{eq:extremal} in $H^p$ to a non-linear extremal problem for non-vanishing functions in $H^2$.

By using the Cauchy--Schwarz inequality in this way and treating $g$ and $h$ as completely independent, we actually double the degree of the non-linear extremal problem. When $k=1$ this does not make the problem much harder, but already for $k=2$ this approach becomes computationally untenable.

For a class of linear extremal problems including \eqref{eq:extremal} on $H^p$ with $1\leq p < \infty$, there is a well-developed theory which yields that the extremal functions have a very specific structure (see e.g.~\cite[Sec.~8.4]{Duren}). The proof of this structure result relies on the fact that $H^p$ is a Banach space and duality arguments. These techniques do not apply for $0<p<1$, but we can replace them with a variational argument which goes back to F.~Riesz \cite{FRiesz19} and obtain the same result also for $0<p<1$. 

This structure result is a special case of a more general result on the structure of the solutions to the Carath\'{e}odory--Fej\'{e}r problem, which was extended from the range $1 \leq p < \infty$ to the range $0<p<1$ by Kabaila \cite{Kabaila60} (see also \cite[pp.~82--83]{Khavinson86} --- the latter reference actually develops a general theory that covers many related variational problems on $H^p$ spaces).  This extension to $0<p<1$ explicitly uses the structure of the solutions for $1\leq p<\infty$, while the variational argument presented in the present paper actually applies in the range $0<p<2$ without modification.

The information regarding the structure of the extremals $f$ for the linear extremal problem \eqref{eq:extremal} thus obtained shows that $g$ and $h$ in the factorization $f=g h^{2/p-1}$ are closely related. This greatly simplifies the non-linear extremal problem we have to solve in order to identify the extremals. Consequently, we are able to completely settle the case $k=2$.
\begin{theorem}\label{thm:C2p} 
	For $0<p<1$ we have
	\[C(2,p) = \frac{2}{p}\left(1-\frac{p}{2}\right)^{\frac{2}{p}-1}\]
	and, up to the rotations $f(z)\mapsto e^{-2i\theta}f(e^{i\theta}z)$, the extremal function in \eqref{eq:extremal} is
	\[f(z) = \left(1-\frac{p}{2}\right)^{\frac{2}{p}}\left(1 + \sqrt{\frac{2p}{2-p}}z + \frac{p}{2-p} z^2\right)^{\frac{2}{p}}.\]
\end{theorem}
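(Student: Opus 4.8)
The plan is to combine the factorization–plus–Cauchy--Schwarz reduction sketched in the introduction with the structure result, so that the extremal problem collapses to a finite-dimensional optimization over quadratic polynomials. Recall that every $f$ in the unit ball of $H^p$ can be written as $f=g\,h^{2/p-1}$ with $g,h$ in the unit ball of $H^2$ and $h$ zero-free, and conversely. Fixing $h$ and applying the Cauchy--Schwarz inequality to \eqref{eq:cauchyproduct} gives $\mre\big(f''(0)/2\big)\le\big(\sum_{i=0}^{2}|c_i|^2\big)^{1/2}$ with $c_i=[z^i]h^{2/p-1}$, and the bound is attained for a suitable $g$, so that
\[
C(2,p)^2=\sup\Big\{\textstyle\sum_{i=0}^{2}|c_i|^2:\ h\ \text{zero-free},\ \|h\|_{H^2}\le1\Big\}.
\]
The structure result is precisely what tames this still infinite-dimensional problem: in the form that $|f(e^{i\theta})|^p$ is a nonnegative trigonometric polynomial of degree at most $2$, the Fej\'er--Riesz theorem yields $|f|^p=|q|^2$ for a polynomial $q(z)=q_0+q_1z+q_2z^2$ without zeros in $\mathbb{D}$, and tracking the inner--outer factorization shows that the extremal $h$ may be taken equal to $q$ while $g$ differs from $q$ only by an inner factor. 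I would first record this reduction carefully, the payoff being that the extremal has the form $f=q^{2/p}$ with $\|q\|_{H^2}^2=|q_0|^2+|q_1|^2+|q_2|^2=\|f\|_{H^p}^p=1$.

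Next I would carry out the finite-dimensional maximization. Using the rotational invariance $f(z)\mapsto e^{-2i\theta}f(e^{i\theta}z)$ together with multiplication by a unimodular constant, I reduce to $q_0,q_1,q_2\ge0$ with $a_2=f''(0)/2$ real and expand
\[
a_2=[z^2]q^{2/p}=q_0^{2/p}\left(\frac{2}{p}\,\frac{q_2}{q_0}+\frac1p\Big(\frac2p-1\Big)\frac{q_1^2}{q_0^2}\right).
\]
Writing $x=q_1/q_0$, $y=q_2/q_0$ and eliminating $q_0=(1+x^2+y^2)^{-1/2}$ through the constraint turns this into maximizing $(1+x^2+y^2)^{-1/p}\big(2y+(\tfrac2p-1)x^2\big)$ over $x,y\ge0$. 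I expect the two stationarity equations to combine with a clean cancellation: comparing them first forces $y=q_2/q_0=p/(2-p)$, and substituting back reduces to the factored identity $(m-1)\big(1-\tfrac{m}{2}x^2\big)=0$ with $m=2/p-1$, whence $x^2=q_1^2/q_0^2=2p/(2-p)$ for $0<p<1$. The normalization then gives $q_0^2=(2-p)^2/4=(1-p/2)^2$, and inserting these values produces both $C(2,p)=\tfrac2p(1-p/2)^{2/p-1}$ and the extremal polynomial $q=(1-p/2)\big(1+\sqrt{2p/(2-p)}\,z+\tfrac{p}{2-p}z^2\big)$, so that $f=q^{2/p}$ is exactly the stated function.

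Finally I would confirm global maximality and uniqueness up to the stated rotations. One checks that the degenerate configurations ($q_1=0$, or $\deg q<2$) give strictly smaller values, the case $q_1=0$ collapsing to the already-solved problem \eqref{eq:C1p} for $k=1$; and that equality in the Cauchy--Schwarz and H\"older steps pins down $g=q$, so that no inner factor survives and $f$ is genuinely zero-free, consistent with the zeros of $q$ having modulus $\sqrt{(2-p)/p}>1$. The main obstacle I anticipate is not the algebra of the optimization, which collapses pleasantly, but the bookkeeping needed to pass rigorously from the structure result to the conclusion $f=q^{2/p}$ with an honest quadratic $q$: one must use the equality cases in H\"older and Cauchy--Schwarz to exclude any Blaschke or singular inner factor and to convert the degree-$2$ trigonometric polynomial $|f|^p$ into a genuine quadratic, and then verify that the interior critical point is the global maximizer rather than merely a stationary point.
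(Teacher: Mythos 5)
Your setup is sound and your $l=0$ computation is correct: the reformulation $C(2,p)=\sup_h\big(\sum_{j=0}^2|c_j|^2\big)^{1/2}$ over zero-free $h$ in the unit ball of $H^2$ is valid, the structure lemma does force $h$ to be a quadratic polynomial, and your stationary-point calculation for $[z^2]q^{2/p}$ reproduces the stated extremal and the value $\tfrac2p(1-\tfrac p2)^{2/p-1}$. The genuine gap is the step where you pass from ``$g$ differs from $q$ only by an inner factor'' to ``the extremal has the form $f=q^{2/p}$.'' Your justification --- that equality in the Cauchy--Schwarz and H\"older steps pins down $g=q$ --- is false. Equality in Cauchy--Schwarz pins down $g$ as the normalized conjugate-reversed degree-$2$ truncation of $h^{2/p-1}$ (equation \eqref{eq:flipeq}), while equality in H\"older pins down $|g|=|h|$ on $\mathbb{T}$, hence $g=Bh$ for a finite Blaschke product $B$ of degree $l\in\{0,1,2\}$ whose zeros are reflections of zeros of $h$. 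Nothing in these equality cases forces $B=1$; that claim is precisely Conjecture~\ref{conj:nonvanish} for $k=2$, which is what has to be \emph{proved}. The cases $l=1$ and $l=2$ are not the ``degenerate configurations $q_1=0$ or $\deg q<2$'' you propose to dismiss: they yield genuine competing candidates (for $l=1$, a system with solution \eqref{eq:C2pl1alphas} and value \eqref{eq:C2pl1Lf}), and eliminating them is the hardest part of the paper's proof --- it requires showing $\Phi(q)=\sqrt{\tfrac{1+\sqrt{2/q}}{1+\sqrt{2q}}}\,q(1+\tfrac1q)^{1-q}\geq 1$ for $q\geq1$ via a monotonicity argument. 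Equivalently: maximizing $[z^2]q^{2/p}$ over quadratics only bounds $C(2,p)$ from below; the quantity you actually reduced to is $\sup_q\sum_{j\le2}\big|[z^j]q^{2/p-1}\big|^2$, and the two maxima coincide only after the $l=0$ case is shown to win.

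If you want to avoid the paper's case analysis, the fix is to stay with your own reformulation: maximize $\big(1+|\beta|^2+|\alpha|^2\big)^{-q}\Big(1+q^2|\beta|^2+\big|\binom{q}{2}\beta^2+q\alpha\big|^2\Big)$ (with $q=2/p-1$, $\beta=\alpha_1+\alpha_2$, $\alpha=\alpha_1\alpha_2$, $|\alpha_j|\le1$) directly over the zeros of the quadratic $h$, which is a legitimate finite-dimensional problem that already incorporates the optimal choice of $g$. That is a different, higher-degree optimization than the one you carried out, and it is essentially the computation the paper warns becomes heavy; either that computation or the paper's $l$-by-$l$ comparison must be supplied before the proof is complete.
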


Comparing \eqref{eq:C1p} and Theorem~\ref{thm:C2p}, we see the curious identity $C(2,p)=C(1,p)^2$. The next result demonstrates that the same relationship does not hold in general. 
\begin{theorem}\label{thm:C323} 
	We have
	\[C(3,2/3) = \sqrt{\frac{2\left(1103+33\sqrt{33}\,\right)}{1153}} = 1.4973\ldots\]
	and, up to the rotations $f(z)\mapsto e^{-3i\theta}f(e^{i\theta}z)$, the extremal function in \eqref{eq:extremal} is
	\[f(z) = \left(\frac{483-19\sqrt{33}}{1153}\right)^{\frac{3}{2}}\left(1+\frac{\sqrt{3+\frac{1}{3}\sqrt{33}}}{2}z+\frac{1+\sqrt{33}}{8}z^2 + \frac{\sqrt{15-\sqrt{33}}}{8}z^3\right)^3.\]
\end{theorem}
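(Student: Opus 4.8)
The plan is to use the structure result for the extremals described above to reduce \eqref{eq:extremal} to a finite-dimensional optimization, and then to solve the latter by Lagrange multipliers. By that result, an extremal $f$ for the functional $L_3$ on $H^{2/3}$ has the form $f = g^{2/p}$ with $g = f^{p/2}$ a polynomial of degree $3$; equivalently, in the factorization $f = g h^{2/p-1}$ one has $g = h$. For $p = 2/3$ we have $2/p = 3$, so $f = g^3$ is simply the cube of a cubic polynomial $g(z) = \sum_{j=0}^3 q_j z^j$, and since $|g^3|^{2/3} = |g|^2$ pointwise on the circle, the normalization $\|f\|_{H^{2/3}} = 1$ is exactly $\sum_{j=0}^3 |q_j|^2 = 1$ (no non-vanishing hypothesis on $g$ is needed because the exponent is an integer). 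Expanding the cube,
\[ \frac{f^{(3)}(0)}{3!} = [z^3]\,g(z)^3 = 3 q_0^2 q_3 + 6 q_0 q_1 q_2 + q_1^3, \]
so that $C(3,2/3)$ equals the maximum of the real part of this expression over the unit sphere of $\mathbb{C}^4$.

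First I would reduce to real nonnegative coefficients: since $\mre(3q_0^2q_3 + 6q_0q_1q_2 + q_1^3)$ is bounded by $3|q_0|^2|q_3| + 6|q_0||q_1||q_2| + |q_1|^3$, with equality when all $q_j \ge 0$, it suffices to maximize the homogeneous cubic $F(q) = 3q_0^2 q_3 + 6 q_0 q_1 q_2 + q_1^3$ over the compact set $\{q_j \ge 0,\ \sum q_j^2 = 1\}$. A short inspection of the boundary faces $q_j = 0$ shows that each yields a value at most $\sqrt{2} < 1.4973$ (the largest boundary value, $\sqrt{2}$, occurring on the face $q_3 = 0$), so the maximum is attained at an interior point where all $q_j > 0$ and the Lagrange conditions $\nabla F = 2\lambda\, q$ hold.

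Next I would solve the Lagrange system. Euler's identity for the degree-$3$ form gives $\lambda = \tfrac32 F$, while the equations $\partial_{q_2}F = 2\lambda q_2$ and $\partial_{q_3}F = 2\lambda q_3$ express $q_2 = 3 q_0 q_1/\lambda$ and $q_3 = 3q_0^2/(2\lambda)$. Substituting these into the remaining two equations and eliminating $q_0$ leads to the quadratic $36 q_1^2 - 3\lambda q_1 - 2\lambda^2 = 0$, whose admissible (positive) root is $q_1 = \lambda(1 + \sqrt{33})/24$; this is where $\sqrt{33}$ enters. Back-substitution then gives $q_0^2 = \lambda^2(15-\sqrt{33})/144$ together with $q_2$ and $q_3$, and the coefficients of the normalized extremal polynomial $1 + c_1 z + c_2 z^2 + c_3 z^3$ (with $c_j = q_j/q_0$) come out to be exactly those in the statement, e.g.\ $c_2 = (1+\sqrt{33})/8$ and $c_3 = \sqrt{15-\sqrt{33}}/8$. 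Finally, imposing $\sum q_j^2 = 1$ determines $\lambda$, and hence $C(3,2/3) = F = \tfrac23\lambda$, after simplification of the resulting radicals.

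I expect the main obstacle to be the algebra of the last two steps: carrying the elimination through to the quadratic in $q_1$ without error, and then verifying that the various nested radicals collapse to the clean closed forms $\sqrt{2(1103+33\sqrt{33})/1153}$ for the extremal value and to the stated $c_j$. A secondary point requiring care is the certification of global optimality and uniqueness: one must check that the positive root is the only critical point in the open orthant (the other root gives $q_1 < 0$), that its value genuinely exceeds all boundary values computed above, and that uniqueness of this real nonnegative maximizer yields, via the rotation action $q_j \mapsto e^{i(j-1)\theta}q_j$ corresponding to $f(z)\mapsto e^{-3i\theta}f(e^{i\theta}z)$, exactly the one-parameter family of extremals asserted in the theorem.
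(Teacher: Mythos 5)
Your reduction misstates the structure result, and this creates a genuine gap. Lemma~\ref{lem:structure} does \emph{not} say that the extremal is $f=g^{2/p}$ with $g=h$; it says that $f$ has the form \eqref{eq:structure}, which for each $0\le l\le 3$ contains a Blaschke product of degree $l$, so that $g$ and $h$ in \eqref{eq:hg} coincide only in the case $l=0$. Asserting $f=g^3$ for a cubic polynomial $g$ amounts to assuming that the extremal does not vanish in $\mathbb{D}$ --- which is precisely Conjecture~\ref{conj:nonvanish} of the paper, stated there as open. Consequently your optimization of $\mre\bigl(3q_0^2q_3+6q_0q_1q_2+q_1^3\bigr)$ over the unit sphere of $\mathbb{C}^4$ only yields the \emph{lower} bound $C(3,2/3)\ge\sqrt{2\left(1103+33\sqrt{33}\right)/1153}$ (every $g^3$ with $\|g\|_{H^2}=1$ lies in the unit ball of $H^{2/3}$), not the matching upper bound. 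The bulk of Section~\ref{sec:C323} is devoted to exactly the part you skip: setting up and solving the nonlinear systems coming from \eqref{eq:flipeq} in the cases $l=3,2,1$ (which produce the candidate values $1$, $16/\sqrt{229}=1.0573\ldots$, and three further values up to $1.1958\ldots$) and verifying that all of them are beaten by the $l=0$ candidate. Without this comparison, neither the value of $C(3,2/3)$ nor the uniqueness of the extremal up to rotation is established.

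Your treatment of what is in effect the $l=0$ case is sound and is a legitimate alternative to the paper's: where the paper encodes stationarity through the Cauchy--Schwarz equality condition \eqref{eq:flipeq} (leading to \eqref{eq:C323l01}--\eqref{eq:C323l03}), you use Lagrange multipliers for the cubic form $F(q)=3q_0^2q_3+6q_0q_1q_2+q_1^3$ on the sphere; your quadratic $36q_1^2-3\lambda q_1-2\lambda^2=0$, the resulting coefficients $c_j$, and the normalization do reproduce the stated extremal and the value $\sqrt{2\left(1103+33\sqrt{33}\right)/1153}$, and the reduction to nonnegative real coefficients and the boundary check are fine. But this only settles the maximum over the subclass $\{g^3\}$; to prove the theorem you must still carry out (or otherwise exclude) the cases $l=1,2,3$ of Lemma~\ref{lem:structure}.
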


This paper is organized into four additional sections. In Section~\ref{sec:prelim} we recall some preliminaries about Hardy spaces and obtain the above-mentioned structure result for $0<p<1$. The proofs of Theorems~\ref{thm:C2p} and \ref{thm:C323} are presented, respectively, in Sections~\ref{sec:C2p} and \ref{sec:C323}. Section~\ref{sec:remarks} contains some concluding remarks, conjectures and discussions of related work.

\section{Preliminaries} \label{sec:prelim} 
In the present section, we will use several basic facts pertaining to Hardy spaces. We refer generally to the monograph \cite{Duren}, which contains most of what which we require. Our goal is to describe the structure of the extremals for bounded linear functionals $L_k$ on $H^p$, when $L_k(f)$ depends only on the first $k+1$ coefficients of the function $f(z)=\sum_{n\geq0} a_n z^n$. In the case $1\leq p < \infty$, this description is a consequence of a general theory of linear extremal problems for $H^p$ spaces developed by Macintyre, Rogosinski, Shapiro and Havinson (see e.g. \cite{Havinson51,MR50} and \cite[Ch.~8]{Duren}).

To set the stage for a discussion of their approach and ours, we recall that every $f$ in $H^p$ has non-tangential boundary limits
\[f(e^{i\theta}) = \lim_{r\to 1^-} f(re^{i\theta})\]
for almost every $e^{i\theta} \in \mathbb{T}=\{z\in\mathbb{C}\,:\, |z|=1\}$. It also holds that $\|f\|_{H^p}=\|f\|_{L^p(\mathbb{T})}$, so $H^p$ is identified with a subspace of $L^p(\mathbb{T})$, the latter defined in terms of the normalized Lebesgue arc length measure on $\mathbb{T}$. 

Every bounded linear functional $L$ on $H^p$, for $1\leq p<\infty$, can be represented in the inner product of $L^2(\mathbb{T})$ as
\[L(f) = \langle f, \varphi \rangle\]
for some analytic function $\varphi$ in $\mathbb{D}$ which is (at least) integrable on $\mathbb{T}$. Since $H^2$ is a Hilbert space, the analytic function $\varphi$ generating the functional is (up to a constant) equal to the extremal $f$ for the functional $L$. This fact leads naturally to the following.

Since $H^p$ is a Banach space when $1\leq p < \infty$, the Hahn--Banach theorem extends every bounded linear functional on $H^p$ to a bounded linear functional on $L^p(\mathbb{T})$ with the same norm. This makes it possible to formulate the dual extremal problem, which is to find an element $\psi$ of minimal norm in $L^{p^\ast}(\mathbb{T})$, where $1/p+1/p^\ast=1$, such that $L(f)=\langle f, \psi \rangle$. These two problems are closely related, and this can be exploited obtain a description of the structure of the extremals (and the structure of the element $\psi$ of minimal norm generating the functional) when the functional depends only on the first $k+1$ coefficients of $f$.

These techniques are not available to us in the range $0<p<1$, both since we cannot use the Hahn--Banach theorem and even if we could, $L^p(\mathbb{T})$ supports no non-trivial bounded linear functionals. We will therefore replace the duality approach outlined above with a variational argument essentially due to F.~Riesz \cite{FRiesz19}. See also \cite[Sec.~2]{Suffridge90} for a similar argument in a somewhat different context. Note that this method actually applies in the range $0<p<2$ without modification. We require two additional preliminary facts before proceeding.

Every function $f$ in $H^p$ can be written as $F = I O$, where $I$ is an inner function and $O$ is an outer function. In particular, $O$ does not vanish in $\mathbb{D}$ and $|I(e^{i\theta})|=1$ for almost every $e^{i\theta} \in \mathbb{T}$. This allows us to factor
\begin{equation}\label{eq:innerouter} 
	f = g h^{2/p-1} 
\end{equation}
where $g = I O^{p/2}$ and $h = O^{p/2}$. We note that $|g(e^{i\theta})|=|h(e^{i\theta})|=|f(e^{i\theta})|^{p/2}$ holds for almost every $e^{i\theta}\in\mathbb{T}$, which yields the norm equalities $\|f\|_{H^p}^p = \|g\|_{H^2}^2 = \|h\|_{H^2}^2$.

Let $H^\infty$ denote the algebra of all bounded analytic functions in $\mathbb{D}$, setting
\[\|\varphi\|_{H^\infty} = \sup_{z\in\mathbb{D}} |\varphi(z)|.\]
Recall that $H^\infty$ is the multiplier algebra of $H^p$, for $0<p<\infty$, i.e. the algebra of functions $\varphi$ such that $\varphi f$ is in $H^p$ for every $f$ in $H^p$. 

Here is the key variational lemma which will give the structure of the extremals as discussed above. We will only use the special case where $\varphi$ is a monomial, but the proof of the lemma in this special case is identical to the proof for the general case. 
\begin{lemma}\label{lem:variational} 
	Fix $0<p<2$. Suppose that $L$ is a bounded linear functional on $H^p$ and that $f$ is an extremal for $\mre{L(f)}$ with $\|f\|_{H^p}=1$. If $f = g h^{2/p-1}$ such that $\|g\|_{H^2}=\|h\|_{H^2}=1$ and $h$ does not vanish in $\mathbb{D}$, then it holds that
	\[L(\varphi f) = L(f)\langle \varphi, |h|^2 \rangle\]
	for every $\varphi \in H^\infty$. 
\end{lemma}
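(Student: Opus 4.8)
The plan is to perturb the extremal $f$ multiplicatively by an $H^\infty$ factor, keep the perturbed function normalized in $H^p$, and read off the stated identity as a first-order (Euler--Lagrange) condition. First I would reduce to the case where $L(f)$ is a nonnegative real number: writing $C=\sup\{\mre L(g):\|g\|_{H^p}=1\}$, for any unit-norm $g$ one has $\mre L(g)\le |L(g)|=\mre L(e^{-i\arg L(g)}g)\le C$, so at an extremal necessarily $\mre L(f)=|L(f)|$, which forces $L(f)=C\ge 0$. Throughout I will use the boundary identity $|h(e^{i\theta})|^2=|f(e^{i\theta})|^p$, which gives $\langle\varphi,|h|^2\rangle=\int_0^{2\pi}\varphi(e^{i\theta})|f(e^{i\theta})|^p\,\frac{d\theta}{2\pi}$ for every $\varphi\in H^\infty$, since $|h|^2$ is real-valued.

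Fix $\varphi\in H^\infty$ and, for real $t$, set $f_t = fe^{t\varphi}/\|fe^{t\varphi}\|_{H^p}$. Because $e^{t\varphi}\in H^\infty$ is a multiplier of $H^p$, we have $f_t\in H^p$ with $\|f_t\|_{H^p}=1$ and $f_0=f$; consequently the real function $t\mapsto\mre L(f_t)$ attains its maximum at $t=0$. Writing $M(t)=\|fe^{t\varphi}\|_{H^p}^p=\int_0^{2\pi}|f|^p e^{pt\,\mre\varphi}\,\frac{d\theta}{2\pi}$ (using $|e^{t\varphi}|=e^{t\,\mre\varphi}$ for real $t$), one gets $M(0)=1$ and $M'(0)=p\int_0^{2\pi}|f|^p\,\mre\varphi\,\frac{d\theta}{2\pi}$, so $N:=M^{1/p}$ satisfies $N'(0)=\int_0^{2\pi}|f|^p\,\mre\varphi\,\frac{d\theta}{2\pi}$. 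Since $t\mapsto fe^{t\varphi}$ is differentiable into $H^p$ at $t=0$ with derivative $\varphi f$ and $L$ is continuous, the quotient rule yields
\[\frac{d}{dt}L(f_t)\Big|_{t=0}=L(\varphi f)-L(f)\,A,\qquad A:=\int_0^{2\pi}|f|^p\,\mre\varphi\,\frac{d\theta}{2\pi}=\mre\langle\varphi,|h|^2\rangle.\]
Taking real parts and using that $t=0$ is a maximum gives $\mre L(\varphi f)=C\,\mre\langle\varphi,|h|^2\rangle$, as $L(f)=C$ is real.

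To recover the full complex identity I would run the same computation with $\varphi$ replaced by $i\varphi\in H^\infty$. Using $\mre L(i\varphi f)=-\mim L(\varphi f)$ together with $\mre\langle i\varphi,|h|^2\rangle=-\mim\langle\varphi,|h|^2\rangle$, this produces $\mim L(\varphi f)=C\,\mim\langle\varphi,|h|^2\rangle$. Combining the two relations gives $L(\varphi f)=C\langle\varphi,|h|^2\rangle=L(f)\langle\varphi,|h|^2\rangle$, which is the assertion.

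The main obstacle I anticipate is purely technical: rigorously justifying the two differentiations at $t=0$. For $M(t)$ this is dominated convergence, valid because $\mre\varphi$ is bounded on $\mathbb{T}$ (so $e^{pt\,\mre\varphi}$ is uniformly bounded for $t$ in a compact interval) and $|f|^p\in L^1(\mathbb{T})$; one then passes from $M$ to $N=M^{1/p}$ using $M(0)=1$. For the $H^p$-differentiability of $t\mapsto fe^{t\varphi}$ one checks that $(e^{t\varphi}-1)/t\to\varphi$ uniformly on $\mathbb{D}$ as $t\to0$, whence $f\big((e^{t\varphi}-1)/t-\varphi\big)\to 0$ in $H^p$ by dominated convergence. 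Everything here is insensitive to whether $p<1$ or $p\ge 1$, which is precisely why the argument goes through without modification in the whole range $0<p<2$.
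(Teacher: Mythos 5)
Your argument is correct, and while it is in the same variational spirit as the paper's proof, it takes a genuinely different route. The paper fixes $g$ and perturbs only the non-vanishing factor, replacing $h$ by a renormalized $(1+\varepsilon\varphi)h$ in $H^2$ and setting $f_\varepsilon=gh_\varepsilon^{2/p-1}$; this requires $\varepsilon\geq0$ small so that $1+\varepsilon\varphi$ is zero-free (hence $h_\varepsilon^{2/p-1}$ is analytic), yields only the one-sided inequality $0\geq q\,\mre\bigl(L(\varphi f)-L(f)\langle\varphi,|h|^2\rangle\bigr)$, and recovers the identity by running the argument with $\pm\varphi$ and $\pm i\varphi$. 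You instead perturb $f$ itself by the zero-free multiplier $e^{t\varphi}$ and renormalize in $H^p$; since $t$ ranges over a full neighbourhood of $0$, the first-order condition is an equality outright, and the factorization $f=gh^{2/p-1}$ enters only at the very end through the boundary identity $|h|^2=|f|^p$, used to recognize the normalization derivative as $\mre\langle\varphi,|h|^2\rangle$. Your preliminary reduction to $L(f)=C\geq0$ is a point the paper leaves implicit (its last displayed equality $\mre\bigl(L(\varphi f)-L(f)\mre\langle\varphi,|h|^2\rangle\bigr)=\mre\bigl(L(\varphi f)-L(f)\langle\varphi,|h|^2\rangle\bigr)$ silently uses that $L(f)$ is real), so making it explicit is a small improvement. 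What your approach buys is that the perturbation step never uses $2/p-1>0$ and so would work verbatim for any $0<p<\infty$; what the paper's approach buys is that the perturbed competitor stays inside the class of functions factored as $gh_\varepsilon^{2/p-1}$ with the same inner part $g/h$, which is the form in which the lemma is subsequently applied. The technical points you flag (differentiability of $t\mapsto fe^{t\varphi}$ into $H^p$ and of the normalization) are handled exactly as you indicate, via the uniform bound $\|e^{t\varphi}-1-t\varphi\|_{H^\infty}=O(t^2)$ and dominated convergence, so there is no gap.
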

\begin{proof}
	Set $q=2/p-1>0$. By \eqref{eq:innerouter} the extremal $f$ in the unit ball of $H^p$ may be written as $gh^q$ where $g$ and $h$ are in the unit ball of $H^2$ and $h$ does not vanish in $\mathbb{D}$. If $\|\varphi\|_{H^\infty}=0$ there is nothing to prove, so we therefore assume that $\|\varphi\|_{H^\infty}>0$ and consider $0\leq\varepsilon<\|\varphi\|_{H^\infty}^{-1}$. A computation reveals that
	\[\|(1+\varepsilon \varphi) h\|_{H^2}^2 = 1 + 2\varepsilon \mre{\langle \varphi, |h|^2 \rangle} + \varepsilon^2 \|\varphi h\|_{H^2}^2,\]
	since $\|h\|_{H^2}=1$. Hence
	\[h_\varepsilon(z) = (1+\varepsilon \varphi(z)) h(z) \big(1 + 2\varepsilon \mre{\langle \varphi, |h|^2 \rangle} + \varepsilon^2 \|\varphi h\|_{H^2}^2\big)^{-\frac{1}{2}}\]
	satisfies $\|h_\varepsilon\|_{H^2}=1$. We then compute
	\[\frac{d}{d\varepsilon} h_\varepsilon(z) \Bigg|_{\varepsilon=0} = \varphi(z) h(z) - \frac{1}{2} h(z) 2\mre{\langle \varphi, |h|^2 \rangle}= h(z)\big(\varphi(z) - \mre{\langle \varphi, |h|^2 \rangle}\big).\]
	If $0 \leq \varepsilon < \|\varphi\|_{H^\infty}^{-1}$, then $h_\varepsilon^q$ is analytic in $\mathbb{D}$ owing to the fact that $1+\varepsilon \varphi$ and $h$ do not vanish in $\mathbb{D}$. Hence, by H\"older's inequality and the fact that $q>0$ we find that $f_\varepsilon = g h_\varepsilon^q$ is in the unit ball of $H^p$. Since $f$ is extremal for $\mre L$, clearly $\mre L(f) \geq \mre L(f_\varepsilon)$ for every $0\leq \varepsilon < \|\varphi\|_\infty^{-1}$. Using that the functional $L$ is bounded, we conclude that 
	\begin{multline*}
		0\geq \mre L \left(\frac{d}{d\varepsilon}f_\varepsilon \Bigg|_{\varepsilon=0}\right) = q \mre \big(L(\varphi f) - L(f) \mre{\langle \varphi, |h|^2 \rangle}\big) \\
		= q \mre \big(L(\varphi f) - L(f)\langle \varphi, |h|^2 \rangle \big). 
	\end{multline*}
	This inequality also holds when $\varphi$ is replaced by $-\varphi$ and $\pm i \varphi$, which implies that $L(\varphi f ) = L(f) \langle \varphi, |h|^2 \rangle$.
\end{proof}

One final preliminary result is required. The Fej\'er--Riesz theorem (see~\cite{Fejer16}) states that the trigonometric polynomial $Q(\theta)=\sum_{|n|\leq k} a_n e^{i\theta n}$ is non-negative if and only if $Q(\theta)=|P(e^{i\theta})|^2$ for a polynomial $P$ of degree at most $k$. 
\begin{lemma}\label{lem:structure} 
	Fix $0<p<2$ and let $L_k$ be a bounded linear functional on $H^p$ such that $L_k(f)$ depends only on the first $k+1$ coefficients of $f(z) = \sum_{n\geq0} a_n z^n$. Any extremal for $L_k$ is given by a sequence $(\alpha_j)_{j=1}^k$ with $|\alpha_j|\leq1$ and a constant $A$ such that 
	\begin{equation} \label{eq:structure} 
		f(z) = A \prod_{j=1}^{l} \frac{z+\alpha_j}{1+\overline{\alpha_j}z} \prod_{j=1}^k (1+\overline{\alpha_j} z)^{2/p}, 
	\end{equation}
	where $0 \leq l \leq k$ and $|\alpha_j|<1$ for $1 \leq j \leq l$. In particular, if $f$ is normalised by $\|f\|_{H^p}$ = 1 and $f=gh^{2/p-1}$ as in \eqref{eq:innerouter}, we have that $h$ and $g$ are polynomials that can be written as 
	\begin{equation} \label{eq:hg}
		h(z) = A_1 \prod_{j=1}^k (1+\overline{\alpha_j} z)\qquad\text{and}\qquad g(z)=A_2 \prod_{j=1}^l (z+\alpha_j)\prod_{j=l+1}^k (1+\overline{\alpha_j} z)
	\end{equation}
	with suitable constants $A_1,A_2$. 
\end{lemma}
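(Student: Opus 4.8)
The plan is to exploit the variational identity from Lemma~\ref{lem:variational} by choosing $\varphi$ to be a monomial and extracting enough linear constraints on the boundary values of $f$ to pin down its inner-outer factors as finite products. Since $L_k(f)$ depends only on $a_0,\dots,a_k$, the functional annihilates $z^{k+1}H^p$, so that $L_k(\varphi f) = 0$ whenever $\varphi f$ has vanishing coefficients through order $k$. Applying the lemma with $\varphi(z)=z^m$ for $m \geq 1$, and writing the extremal as $f = gh^{2/p-1}$ with $\|g\|_{H^2}=\|h\|_{H^2}=1$ and $h$ non-vanishing, I would read off the identity $L_k(z^m f) = L_k(f)\langle z^m, |h|^2\rangle$. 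Here $\langle z^m, |h|^2\rangle$ is the $(-m)$-th Fourier coefficient of $|h|^2$, equivalently the $m$-th Taylor coefficient of $\overline{h}h$ read on the circle; since $h \in H^2$ this is $\sum_{n\geq 0}\overline{c_n}\,c_{n+m}$ where $h(z)=\sum c_n z^n$. The first goal is to show these inner products vanish for all $m$ large enough, which forces $|h|^2$ to be a trigonometric polynomial of degree at most $k$ on $\mathbb{T}$.

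\medskip

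First I would verify that $L_k(z^m f) = 0$ for every $m\geq 1$. Indeed $z^m f(z) = \sum_{n\geq 0} a_n z^{n+m}$ has its coefficient of $z^j$ equal to $a_{j-m}$, so for $j \leq k$ with $m \geq 1$ these coefficients involve only indices $\leq k-1 < k$; more carefully, since $L_k$ depends only on the coefficients up to order $k$ and these are shifted upward by $m\geq 1$, the relevant data is a subset of the coefficients of $f$ of index $\leq k-1$, and one checks directly that the representing analytic symbol of $L_k$ is a polynomial of degree $\le k$ so that $L_k(z^m f)$ picks out the coefficient of $z^{k}$ in $z^m f$, namely $a_{k-m}$, weighted by the top symbol coefficient. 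The clean way to organize this is to write $L_k(f) = \sum_{j=0}^{k} \lambda_j a_j$ for scalars $\lambda_j$, so that $L_k(z^m f) = \sum_{j=0}^{k}\lambda_j a_{j-m} = \sum_{j=m}^{k}\lambda_j a_{j-m}$, which for $m>k$ is an empty sum and hence zero. Combining with the variational identity, for every $m>k$ we obtain $L_k(f)\,\langle z^m,|h|^2\rangle = 0$. Since $L_k(f)=\mre L_k(f) \neq 0$ at an extremal (the extremal value is positive, as the zero function is not extremal), we conclude $\langle z^m, |h|^2\rangle = 0$ for all $m>k$, and by conjugation symmetry the same holds for the Fourier coefficients of $|h|^2$ of index $<-k$.

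\medskip

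Consequently $|h(e^{i\theta})|^2 = \sum_{|n|\leq k} \widehat{(|h|^2)}(n)\,e^{in\theta}$ is a non-negative trigonometric polynomial of degree at most $k$. By the Fej\'er--Riesz theorem there is a polynomial $P$ of degree at most $k$, which we may take to have all zeros in $\overline{\mathbb{D}}$ and be zero-free in $\mathbb{D}$, with $|P(e^{i\theta})|^2 = |h(e^{i\theta})|^2$ a.e. Since $h$ is outer (being $O^{p/2}$ with $O$ outer) and non-vanishing, and $|h| = |P|$ on $\mathbb{T}$, the outer parts of $h$ and $P$ coincide up to a unimodular constant; thus $h = A_1 \prod_{j=1}^{k}(1+\overline{\alpha_j}z)$ after writing the zero-free-in-$\mathbb{D}$ polynomial $P$ in terms of its reciprocal roots, which is exactly the asserted form for $h$ in \eqref{eq:hg}. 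For $g$, I would run the identical argument with the roles adjusted: since $|g(e^{i\theta})| = |h(e^{i\theta})| = |f(e^{i\theta})|^{p/2}$, the boundary modulus of $g$ equals $|P|$ as well, so $g$ has the same outer factor as $h$; the only freedom is the inner factor of $f$, which by the standard canonical factorization is a Blaschke product (there is no singular inner factor because $\log|f|$ is integrable and the outer part is already a polynomial, forcing the singular measure to vanish). A finite Blaschke product whose denominator must divide the outer factor $\prod(1+\overline{\alpha_j}z)$ then has the form $\prod_{j=1}^{l}\frac{z+\alpha_j}{1+\overline{\alpha_j}z}$ with $|\alpha_j|<1$ for $j\le l$, which yields \eqref{eq:structure} upon multiplying by $h^{2/p-1}$ and recovers the stated product formula for $g$ in \eqref{eq:hg}.

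\medskip

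The main obstacle I anticipate is the transition from ``$|h|^2$ is a trigonometric polynomial of degree $\le k$'' to the precise factored forms with a shared outer part: one must argue that $h$ is genuinely a polynomial (not merely of polynomial modulus) using that it is outer and zero-free, invoke uniqueness of the outer function with prescribed boundary modulus, and then control the inner factor of $f$ to show it is a finite Blaschke product whose poles are cancelled by the outer polynomial so that $f$ itself is analytic with the claimed finite-product structure. Checking that the number of Blaschke factors $l$ satisfies $0\le l\le k$ and that the $\alpha_j$ appearing in the inner factor can be taken among the same reciprocal-root data as in $h$ requires a careful bookkeeping of degrees, but it is the Fej\'er--Riesz step and the outer-function uniqueness that carry the real content.
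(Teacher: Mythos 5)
The first half of your argument --- applying Lemma~\ref{lem:variational} with $\varphi(z)=z^m$, observing $L_k(z^mf)=0$ for $m>k$ and $L_k(f)\neq 0$ at an extremal, concluding that $|h|^2$ is a non-negative trigonometric polynomial of degree at most $k$, and then invoking Fej\'er--Riesz together with the uniqueness of an outer function with prescribed boundary modulus to get $h(z)=A_1\prod_{j=1}^k(1+\overline{\alpha_j}z)$ --- is exactly the paper's argument and is correct. The gap is in your treatment of $g$, i.e.\ of the inner factor $I$ of $f$. Your parenthetical justification that ``there is no singular inner factor because $\log|f|$ is integrable and the outer part is already a polynomial, forcing the singular measure to vanish'' is not valid: every nonzero $H^p$ function has $\log|f|$ integrable on $\mathbb{T}$, and a singular inner factor has unimodular boundary values, so it is invisible to the boundary modulus and is in no way constrained by knowing the outer part. (The function $\exp(-(1+z)/(1-z))$ has outer part identically $1$, a polynomial, yet a nontrivial singular factor.) Likewise, nothing in your argument rules out an \emph{infinite} Blaschke product: $g=Ih$ with $h$ a polynomial and $I$ an infinite Blaschke product is a perfectly good $H^2$ function, and the statement that ``the denominator must divide the outer factor'' presupposes that $g$ is a polynomial, which you have not established. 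The variational identity you used constrains only $|h|^2$, hence only the outer part of $f$; it gives no information about $I$.

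The missing idea is a \emph{second} use of extremality. With $h$ now fixed as the polynomial above, $h^{2/p-1}\in H^\infty$ since $2/p-1>0$, so $g\mapsto \mre L_k(gh^{2/p-1})$ is a bounded linear functional on $H^2$ depending only on the first $k+1$ Taylor coefficients of $g$, and (by H\"older) the extremal $f$ forces $g$ to maximize this functional over the unit ball of $H^2$. By the Riesz representation in the Hilbert space $H^2$ (equivalently, the equality case of Cauchy--Schwarz), the maximizer is a constant multiple of the representing element, which is a polynomial of degree at most $k$. Only now does $I=g/h$ become a rational inner function, hence a finite Blaschke product of degree $l\leq k$ whose poles must be cancelled by the zeros of $h$, which forces the Blaschke parameters to coincide with the $\alpha_j$ and yields \eqref{eq:hg}. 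Without this step the conclusion does not follow.
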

\begin{proof}
	We begin by writing $f = g h^{2/p-1}$ as in \eqref{eq:innerouter}. We use Lemma~\ref{lem:variational} with $\varphi(z)=z^n$ to obtain
	\[L_k(z^n f) = L(f)\langle z^n, |h|^2 \rangle.\]
	Since $L_k(z^n f)=0$ for $n>k$, we conclude that $|h|^2$ is a trigonometric polynomial of degree at most $k$. The non-negativity of $|h|^2$ and the Fej\'er--Riesz theorem implies that $|h(e^{i\theta})|^2=|P(e^{i\theta})|^2$ for some polynomial $P$ of degree at most $k$. It is clear that $P = B \widetilde{P}$, where $B$ is a finite Blaschke product and $\widetilde{P}$ is an outer polynomial of degree at most $k$. Since an outer function is determined up to a unimodular constant by its modulus on $\mathbb{T}$, we therefore find that $h = e^{i \vartheta}\widetilde{P}$, which means that
	\[h(z) = A_1 \prod_{j=1}^k (1+\overline{\alpha_j} z),\]
	for $|\alpha_j|\leq1$. Our next goal is to establish that $g$ is also a polynomial of degree at most $k$. Suppose that $h$ is fixed as above and note that $h^{2/p-1}$ is in $H^\infty$ since $2/p-1>0$. The fact that $f$ is extremal for $L_k$ and H\"older's inequality implies that $g$ is an $H^2$ function of unit norm attaining the maximum of
	\begin{equation} \label{eq:gfunc}
		g\mapsto \mre L_k(f) = \mre L_k(g h^{2/p-1}).
	\end{equation}
	It is clear that \eqref{eq:gfunc} defines a bounded linear functional on $H^2$ which depends only on the first $k+1$ coefficients of $g$. The Cauchy--Schwarz inequality then implies that $g$ is a polynomial of degree at most $k$. By \eqref{eq:innerouter}, we recall that $g = I h$ for a inner function $I$ and a polynomial $h$. Clearly this is only possible if the inner function $I$ is a finite Blaschke product of degree $0 \leq l \leq k$. Hence
	\[g(z) = A_2 \prod_{j=1}^l \frac{z+\beta_j}{1+\overline{\beta_j}z} \prod_{j=1}^k (1+\overline{\alpha_j} z),\]
	for $|\beta_j|<1$. Since $g$ is a polynomial, we must have $\beta_j = \alpha_j$ for $1 \leq j \leq l$. 
\end{proof}

Let us now return to the bounded linear functional defined by $L_k(f) = a_k$ for $f(z)=\sum_{n\geq0} a_n z^n$ in $H^p$. In the case $1<p<\infty$, the strict convexity of $H^p$ yields easily that the extremal for $C(k,p)=1$ is $f(z) = z^k$. Hence $h(z)=1$ and $g(z)=z^k$ in \eqref{eq:hg}. In the case $p=1$ it is known (see e.g.~\cite[p.~143]{Duren}) that every function of the form \eqref{eq:structure} is an extremal for $C(k,1)=1$. 

For $0<p<1$, we can factor the extremal as
\[f = g h^{2/p-1},\]
where $g$ and $h$ are polynomials related by \eqref{eq:hg}. Our plan is to consider each of the cases $l=0,\ldots,k$ in Lemma~\ref{lem:structure} through the Cauchy product \eqref{eq:cauchyproduct}. Since we may assume that $\|f\|_{H^p}=\|g\|_{H^2}=\|h\|_{H^2}=1$ for any extremal $f$, there must be a constant $\lambda$ such that the equation 
\begin{equation}\label{eq:flipeq} 
	\lambda z^k g(z^{-1}) = \overline{h^{2/p-1}(\overline{z})}+O(z^{k+1}). 
\end{equation}
holds. Namely, otherwise we could modify $g$ to obtain equality in Cauchy--Schwarz in \eqref{eq:cauchyproduct} while keeping $\|g\|_{H^2}=1$ and a fortiori $\|f\|_{H^p}\leq 1$, by H\"older's inequality. By the same argument, it follows that any such (not necessarily normalized) solution of the equation \eqref{eq:flipeq} satisfies
\begin{equation} \label{eq:cprod}
	L_k(f) = \sum_{j=0}^k b_j c_{k-j} = \lambda \sum_{j=0}^k |b_j|^2 = \lambda \|g\|_{H^2}^2.
\end{equation}
In practice this approach will yield a non-linear system of $k+1$ equations in the $k+1$ unknowns which needs to be solved in order to identify the candidate extremal function. We complete the program by comparing the solutions for $l=0,\ldots,k$. 

Using Lemma~\ref{lem:structure} and \eqref{eq:flipeq} in this way, it is possible to give a (computationally) simpler proof of \eqref{eq:C1p} compared to the one given in \cite[Thm.~4.1]{BBSS}.

\section{Proof of Theorem~\ref{thm:C2p}} \label{sec:C2p} 
For $0<p<1$ define $q=2/p-1>1$. For the functional $L_2(f)=a_2$ we get from Lemma~\ref{lem:structure} that the extremal functions are of the form 
\begin{multline*}
	f(z) = A \prod_{j=1}^l \frac{z+\alpha_j}{1+\overline{\alpha_j}z} \prod_{j=1}^2 (1+\overline{\alpha_j}z)^{2/p} \\
	=A \prod_{j=1}^l (z+\alpha_j) \prod_{j=l+1}^2 (1+\overline{\alpha_j}z) \prod_{j=1}^2 (1+\overline{\alpha_j}z)^q = A g(z) (h(z))^q, 
\end{multline*}
where $|\alpha_j|\leq1$ with strict inequality for $1 \leq j \leq l$. We get three equations from $l=0,1,2$. Recall that $\|g\|_{H^2}=\|h\|_{H^2}$, so the normalizing constant is $A = \|h\|_{H^2}^{-2/p}$. We begin by computing
\[\overline{(h(\overline{z}))^q} = 1 + q\beta z + \left(\binom{q}{2}\beta^2 + q \alpha\right)z^2 + O(z^3),\]
where $\alpha=\alpha_1\alpha_2$ and $\beta = \alpha_1+\alpha_2$. Hence the equation \eqref{eq:flipeq} becomes 
\begin{equation}\label{eq:C2peq} 
	\lambda z^2 g(z^{-1}) = 1 + q\beta z + \left(\binom{q}{2}\beta^2 + q \alpha\right)z^2. 
\end{equation}
Note that if $f$ is a normalized solution of the equation \eqref{eq:C2peq}, then by \eqref{eq:cprod} we get
\begin{equation}\label{eq:C2pLf} 
	a_2= L_2(f) = A |\lambda| \|g\|_{H^2}^2 = |\lambda| \|h\|_{H^2}^{2(1-1/p)} = |\lambda| \left(1+|\beta|^2+|\alpha|^2\right)^{1-1/p}. 
\end{equation}

\subsection*{The case $l=2$} Here we have
\[g(z) = (z+\alpha_1)(z+\alpha_2) = z^2 + \beta z+\alpha,\]
so the equation \eqref{eq:C2peq} takes the form:
\[\lambda =1 \qquad \lambda\beta = q\beta \qquad \lambda \alpha = \binom{q}{2}\beta^2 + q\alpha\]
Recalling that $q>1$ we conclude that $\alpha=\beta=0$. Hence $\alpha_1=\alpha_2=0$ and the normalized candidate extremal function function is $f(z)=z^2$ which has $a_2=1$.

\subsection*{The case $l=1$} Here we have
\[g(z) = (z+\alpha_1)(1+\overline{\alpha_2}z) = \overline{\alpha_2}z^2 + (1+\alpha_1\overline{\alpha_2})z+\alpha_1.\]
By a rotation, we assume that $\alpha_2\geq0$ and hence the equation \eqref{eq:C2peq} takes the form: 
\begin{align}
	\lambda \alpha_2 &= 1 \label{eq:C2pl11} \\
	\lambda(1+\alpha_1\alpha_2) &= q(\alpha_1+\alpha_2) \label{eq:C2pl12} \\
	\lambda \alpha_1 &= {\textstyle\binom{q}{2}}(\alpha_1+\alpha_2)^2 + q\alpha_1\alpha_2 
\label{eq:C2pl13} \end{align}
From \eqref{eq:C2pl11} we get that $\alpha_2=\lambda^{-1}>0$. Inserting this into \eqref{eq:C2pl12} yields that 
\begin{equation} \label{eq:C2pl12A} 
	\frac{1}{\alpha_2}+\alpha_1 = q(\alpha_1+\alpha_2). 
\end{equation}
Since $q>1$ we now see that $\alpha_1$ is real. We then multiply \eqref{eq:C2pl12A} with $\alpha_1$ and rearrange to obtain $\lambda \alpha_1 - q \alpha_1 \alpha_2 = (q-1)\alpha_1^2$, which when inserted into \eqref{eq:C2pl13} yields 
\[\frac{2}{q} \alpha_1^2 = (\alpha_1+\alpha_2)^2.\]
Taking the square root of this we find that
\[\alpha_2 = \alpha_1\left(-1\pm\sqrt{\frac{2}{q}}\right) \qquad\text{and}\qquad \frac{1}{\alpha_2} = \alpha_1\left(-1\pm \sqrt{2q}\right),\]
where the second equality was obtained by inserting the first into \eqref{eq:C2pl12A}. Note that for $1< q \leq 2$ we see from the second equation that we have to choose the negative sign to ensure that $|\alpha_1 \alpha_2|<1$. In the range $2<q<\infty$ we also have to choose the negative sign to ensure that the sign requirement $\alpha_1<0$ from first equation also holds in the second. In particular, we get that $\alpha_1<0$ in general. Evidently, 
\begin{equation}\label{eq:C2pl1alphas} 
	\alpha_1^2 = \frac{1}{\big(1+\sqrt{2/q}\big)(1+\sqrt{2q})} \qquad \text{and}\qquad \alpha_2^2 = \frac{1+\sqrt{2/q}}{1+\sqrt{2q}}. 
\end{equation}
Recalling that $\lambda = \alpha_2^{-1}$, we get from \eqref{eq:C2pLf} that the normalized candidate extremal function $f$ satisfies 
\begin{equation}\label{eq:C2pl1Lf} 
	a_2 = L_2(f) = \frac{1}{\alpha_2} \left(1 + (\alpha_1+\alpha_2)^2 + (\alpha_1\alpha_2)^2\right)^{1-1/p}. 
\end{equation}

\subsection*{The case $l=0$} Here we have
\[g(z) = (1+\overline{\alpha_1}z)(1+\overline{\alpha_2}z) = \overline{\alpha} \,z^2 + \overline{\beta}\,z + 1.\]
If $\beta=0$ we get the extremal \eqref{eq:C1pextremal} for $C(1,p)$ with the argument squared. Assume therefore that $\beta\neq0$. There are two rotations $e^{i\theta}$ and $e^{i(\theta+\pi)}$ such that $\alpha\geq0$. The equation \eqref{eq:C2peq} takes the form: 
\begin{align}
	\lambda \alpha &= 1 \label{eq:C2pl01} \\
	\lambda \overline{\beta} &= q \beta \label{eq:C2pl02} \\
	\lambda &= {\textstyle \binom{q}{2}}\beta^2 + q\alpha 
\label{eq:C2pl03} \end{align}
From \eqref{eq:C2pl01} we get that $\lambda = \alpha^{-1}>0$. Since $\alpha,\lambda,q>0$ we get from \eqref{eq:C2pl03} that $\beta^2$ is real, and hence $\beta$ is real or imaginary. By \eqref{eq:C2pl02} we see that $\beta$ cannot be imaginary, since $\lambda,q>0$. We conclude that $\beta$ is real. Choosing the appropriate rotation above we get that $\beta>0$. Combining \eqref{eq:C2pl01} and \eqref{eq:C2pl02} yields that $\alpha = \lambda^{-1} = q^{-1}$. Inserting this into \eqref{eq:C2pl03} we find that
\[q = \binom{q}{2} \beta^2 +1 \qquad \implies \qquad \beta = \sqrt{\frac{2}{q}}.\]
We get from \eqref{eq:C2pLf} that the normalized candidate extremal function satisfies 
\begin{equation}\label{eq:C2pl2Lf} 
	a_2 = L_2(f) = q \left(1+\frac{2}{q}+\frac{1}{q^2}\right)^{1-1/p}. 
\end{equation}

\subsection*{Final part in the proof of Theorem~\ref{thm:C2p}} We need to compare the normalized candidate extremal functions from the equations $l=0,1,2$. Clearly $a_2=1$ from $l=2$ can be discarded at once. Comparing \eqref{eq:C2pl1Lf} and \eqref{eq:C2pl2Lf}, we claim that
\[\frac{1}{\alpha_2} \left(1 + (\alpha_1+\alpha_2)^2 + (\alpha_1\alpha_2)^2\right)^{1-1/p} \leq q \left(1+\frac{2}{q}+\frac{1}{q^2}\right)^{1-1/p},\]
where $\alpha_1$ and $\alpha_2$ are given by \eqref{eq:C2pl1alphas}. We recall that $1-1/p<0$, so a stronger statement is
\[1 \leq \alpha_2 q \left(1+\frac{2}{q}+\frac{1}{q^2}\right)^{1-1/p} = \sqrt{\frac{1+\sqrt{2/q}}{1+\sqrt{2q}}}q\left(1+\frac{1}{q}\right)^{1-q} = \Phi(q),\]
where we used that $2/p-1=q$. Note that $\Phi(1)=1$. We compute
\[\frac{d}{dq}\log{\Phi(q)} = -\frac{1}{2\sqrt{2q}}\left(\frac{1}{q+\sqrt{2q}}+\frac{1}{1+\sqrt{2q}}\right) + \frac{2}{1+q} - \log\left(1+\frac{1}{q}\right).\]
For $q\geq1$ it holds that $q+\sqrt{2q} \geq 1+\sqrt{2q}$, so
\[-\frac{1}{2\sqrt{2q}}\left(\frac{1}{q+\sqrt{2q}}+\frac{1}{1+\sqrt{2q}}\right) \geq -\frac{1}{\sqrt{2q}+2q} \geq -\frac{1}{\sqrt{2}+2q} \geq -\frac{2-\sqrt{2}}{1+q}.\]
The final inequality is easily checked directly. Consequently
\[\frac{d}{dq}\log{\Phi(q)} \geq \frac{\sqrt{2}}{1+q}-\log\left(1+\frac{1}{q}\right)=\Psi(q).\]
We get that $\Phi$ is increasing on $1<q<\infty$ by proving that $\Psi(q)>0$ in the same range, which can be deduced by checking the non-negativity of $\Psi$ in the endpoints and at the critical point $q=1+\sqrt{2}$. Hence we conclude that the case $l=0$ provides the extremal function and that
\[C(2,p) = q \left(1+\frac{2}{q}+\frac{1}{q^2}\right)^{1-1/p} = \frac{2}{p}\left(1-\frac{p}{2}\right)^{\frac{2}{p}-1}.\]
In the case $l=0$ we have that $g(z)=h(z)=1+\beta z + \alpha z^2$, so a computation yields the stated extremal function. \qed

\section{Proof of Theorem~\ref{thm:C323}} \label{sec:C323} 
By Lemma~\ref{lem:structure}, we get that the candidate extremal functions for the functional $L_3(f)=a_3$ acting on $H^p$ with $p=2/3$ are of the form 
\begin{multline*}
	f(z) = A \prod_{j=1}^l \frac{z+\alpha_j}{1+\overline{\alpha_j}z} \prod_{j=1}^3 (1+\overline{\alpha_j}z)^3 \\
	= A \prod_{j=1}^l (z+\alpha_j) \prod_{j=l+1}^3 (1+\overline{\alpha_j}z) \prod_{j=1}^3 (1+\overline{\alpha_j})^2 = A g(z) (h(z))^2, 
\end{multline*}
where $|\alpha_j|\leq1$ with strict inequality for $1 \leq j \leq l$. There are four equations, from $l=0,1,2,3$. Recall that $\|g\|_{H^2}=\|h\|_{H^2}$ and that the normalizing constant is $A = \|h\|_{H^2}^{-3}$. We begin by computing
\[\overline{(h(\overline{z}))^2} = 1 + 2 \beta z + \left(\beta^2 + 2\gamma\right)z^2 + 2\left(\beta\gamma+\alpha\right)z^3 + O(z^4)\]
where $\alpha=\alpha_1\alpha_2\alpha_3$, $\beta = \alpha_1+\alpha_2+\alpha_3$ and $\gamma = \alpha_1\alpha_2+\alpha_1\alpha_3+\alpha_2\alpha_3$. Hence the equation \eqref{eq:flipeq} becomes 
\begin{equation}\label{eq:C323eq} 
	\lambda z^3 g(z^{-1}) = 1 + 2 \beta z + \left(\beta^2 + 2\gamma\right)z^2 + 2\left(\beta\gamma+\alpha\right)z^3. 
\end{equation}
Note that if $f$ is a normalized solution to the equation \eqref{eq:C323eq}, then by \eqref{eq:cprod} we get
\begin{equation}\label{eq:C323Lf} 
	a_3 = L_3(f) = A |\lambda| \|g\|_{H^2}^2 = |\lambda| \|h\|_{H^2}^{-1} = |\lambda| \left(1+|\beta|^2+|\gamma|^2+|\alpha|^2\right)^{-1/2}. 
\end{equation}

\subsection*{The case $l=3$} Here we get
\[g(z) = (z+\alpha_1)(z+\alpha_2)(z+\alpha_3) = z^3 + \beta z^2 + \gamma z + \alpha,\]
which means that the equation \eqref{eq:C323eq} takes the form:
\[\lambda = 1 \qquad \lambda \beta = 2 \beta \qquad \lambda \gamma = \beta^2+2\gamma \qquad \lambda \alpha = 2\left(\beta\gamma+\alpha\right)\]
The only solution is $\alpha=\beta=\gamma=0$, which implies $\alpha_1=\alpha_2=\alpha_3=0$. The normalized candidate extremal function is $f(z)=z^3$, which has $a_3=1$.

\subsection*{The case $l=2$} Here we get 
\begin{multline*}
	g(z) = (z+\alpha_1)(z+\alpha_2)(1+\overline{\alpha_3}z) \\
	=\overline{\alpha_3}z^3 + \left((\alpha_1+\alpha_2)\overline{\alpha_3}+1\right)z^2 + \left(\alpha_1\alpha_2\overline{\alpha_3}+\alpha_1+\alpha_2\right)z+\alpha_1\alpha_2. 
\end{multline*}
Set $\xi=\alpha_1\alpha_2$, $\eta = \alpha_1+\alpha_2$ and $\alpha_3=\varrho$. By a rotation, we may assume that $\varrho\geq0$. The equation \eqref{eq:C323eq} takes the form: 
\begin{align}
	\lambda \varrho &= 1 \label{eq:C323l21} \\
	\lambda(\eta\varrho+1) &= 2(\eta+\varrho) \label{eq:C323l22} \\
	\lambda(\xi\varrho+\eta) &= (\eta+\varrho)^2 + 2(\xi+\eta\varrho) \label{eq:C323l23} \\
	\lambda \xi &= 2\big((\eta+\varrho)(\xi+\eta\varrho)+\xi\varrho\big) 
\label{eq:C323l24} \end{align}
From \eqref{eq:C323l21} we get that $\varrho>0$. Inserting \eqref{eq:C323l21} into \eqref{eq:C323l22} and solving for $\eta$ yields that 
\begin{equation}\label{eq:betatilde} 
	\eta = \frac{1}{\varrho}-2\varrho. 
\end{equation}
Inserting \eqref{eq:C323l21} into \eqref{eq:C323l23} and solving for $\xi$ yields that 
\begin{equation}\label{eq:alphatilde} 
	\xi = \frac{\eta}{\varrho} - 2 \eta\varrho-(\eta+\varrho)^2 = \frac{1}{\varrho^2}-2 -(1-2\varrho^2)-\left(\frac{1}{\varrho}-\varrho\right)^2 = 3\varrho^2-2, 
\end{equation}
where we in the penultimate equality used \eqref{eq:betatilde}. Inserting \eqref{eq:C323l21}, \eqref{eq:betatilde} and \eqref{eq:alphatilde} into \eqref{eq:C323l24} now yields
\[3\varrho-\frac{2}{\varrho} = 2 \left(\left(\frac{1}{\varrho}-\varrho\right)(\varrho^2-1)+(3\varrho^2-2)\varrho\right) = 4\varrho^3-\frac{2}{\varrho}.\]
Since $\varrho>0$ we get that $\varrho = \sqrt{3}/2$, which by \eqref{eq:betatilde} and \eqref{eq:alphatilde} implies that $\eta=-\sqrt{3}/3$ and $\xi=1/4$, respectively. Recalling that $\lambda = \varrho^{-1}$, $\alpha = \xi \varrho$, $\beta = \eta+\varrho$ and $\gamma=\xi+\eta\varrho$, we get from \eqref{eq:C323Lf} that the normalized candidate extremal function $f$ satisfies 
\begin{equation}\label{eq:C323l2Lf} 
	a_3 = L_3(f) = \frac{1}{\varrho}\left(1 + (\eta+\varrho)^2 + (\xi+\eta\varrho)^2 + (\xi \varrho)^2\right)^{-1/2} = \frac{16}{\sqrt{229}} = 1.0573\ldots 
\end{equation}

\subsection*{The case $l=1$} Here we get 
\begin{multline*}
	g(z) = (z+\alpha_1) (1+\overline{\alpha_2}z)(1+\overline{\alpha_3}z) \\
	= z^3 \overline{\alpha_2\alpha_3} + z^2(\overline{\alpha_2+\alpha_3}+\alpha_1\overline{\alpha_2\alpha_3}) + z(1+\alpha_1(\overline{\alpha_2+\alpha_3})) + \alpha_1. 
\end{multline*}
Set $\varrho = \alpha_1$, $\eta=\alpha_2+\alpha_3$ and $\xi=\alpha_2\alpha_3$. There are four rotations $e^{i\theta}$, $e^{i(\theta\pm\pi/2)}$ and $e^{i(\theta+\pi)}$ such that $\xi$ is real. The equation \eqref{eq:C323eq} then takes the form: 
\begin{align}
	\lambda \xi &= 1 \label{eq:C323l11} \\
	\lambda (\overline{\eta}+\varrho\xi) &= 2(\varrho+\eta) \label{eq:C323l12} \\
	\lambda (1+\varrho\overline{\eta}) &= (\varrho+\eta)^2 + 2(\varrho\eta + \xi) \label{eq:C323l13} \\
	\lambda \varrho &= 2\big((\varrho+\eta)(\varrho\eta+\xi)+\varrho\xi\big) 
\label{eq:C323l14} \end{align}
From \eqref{eq:C323l11} we get that $\xi\neq0$ and $\lambda = \xi^{-1}$. Inserting this into \eqref{eq:C323l12}, we obtain 
\begin{equation}\label{eq:varrhoeq} 
	\varrho = \frac{\overline{\eta}}{\xi}-2\eta. 
\end{equation}
Inserting \eqref{eq:C323l11} and \eqref{eq:varrhoeq} into \eqref{eq:C323l13}, we obtain 
\begin{multline*}
	\frac{1}{\xi} + \frac{\overline{\eta}^2}{\xi^2}-\frac{2|\eta|^2}{\xi} = \left(\frac{\overline{\eta}}{\xi}-\eta\right)^2 + 2\left(\frac{|\eta|^2}{\xi}-2\eta^2 + \xi \right) = \frac{\overline{\eta}^2}{\xi^2}-3\eta^2 + 2\xi \\
	\qquad \Longleftrightarrow \qquad \frac{1}{\xi}-\frac{2|\eta|^2}{\xi} = 2\xi -3\eta^2. 
\end{multline*}
Hence we find that $\eta^2$ is real. By choosing the appropriate rotation above, we may assume that $\eta\geq0$, in which case it holds that 
\begin{equation}\label{eq:etaeq} 
	\eta = \sqrt{\frac{1-2\xi^2}{2-3\xi}}.
\end{equation}
We then insert \eqref{eq:C323l11} and \eqref{eq:varrhoeq} into \eqref{eq:C323l14}, keeping in mind that $\eta\geq0$, to obtain 
\begin{equation}\label{eq:prediveta} 
	\frac{\eta}{\xi}\left(\frac{1}{\xi}-2\right)=2 \left(\eta \left(\frac{1}{\xi}-1\right)\left(\eta^2 \left(\frac{1}{\xi}-2\right)+\xi\right)+\eta(1-2\xi)\right). 
\end{equation}
The equation \eqref{eq:prediveta} with $\eta$ as in \eqref{eq:etaeq} has five real solutions. Before we compute them, let us recall that that $\beta = \varrho + \eta$, $\gamma = \varrho\eta+\xi$ and $\alpha = \varrho \xi$, so we get from \eqref{eq:C323l2Lf} that in each case the normalized candidate extremal function $f$ satisfies 
\begin{equation}\label{eq:a3C323l1} 
	a_3 = L(f) = \frac{1}{|\xi|} \left(1+(\varrho+\eta)^2 + (\varrho\eta+\xi)^2 + (\varrho \xi)^2\right)^{-1/2}. 
\end{equation}
The first two solutions of \eqref{eq:prediveta} arise from the case $\eta=0$, which occurs when $\varrho=0$ and $\xi^2 = 1/2$. Here we easily find from \eqref{eq:a3C323l1} that 
\begin{equation}\label{eq:C323l1Lf1} 
	a_3 = \frac{2}{\sqrt{3}}=1.1547\ldots. 
\end{equation}
If $\eta\neq0$, we may multiply \eqref{eq:prediveta} by $(2-3\xi)\xi/\eta$, then insert the value for $\eta^2$ and simplify to obtain
\[10 \xi^3-12\xi^2 + 2 \xi + 1 = 0.\]
This equation has the following solutions: 
\begin{align*}
	\xi_1 &= {\textstyle\frac{2}{5}}\Big(1-\sqrt{\textstyle\frac{7}{3}}\cos{\vartheta}\Big) = -0.2049\ldots \\
	\xi_2 &= {\textstyle\frac{1}{5}}\Big(2+\sqrt{\textstyle\frac{7}{3}}\big(\cos{\vartheta}-\sqrt{3}\sin{\vartheta}\big)\Big) = 0.6281\ldots \qquad\qquad \text{for } \vartheta = {\textstyle\frac{1}{3}}\arctan\Big({\textstyle\frac{5\sqrt{111}}{117}}\Big)\\
	\xi_3 &= {\textstyle\frac{1}{5}}\Big(2+\sqrt{\textstyle\frac{7}{3}}\big(\cos{\vartheta}+\sqrt{3}\sin{\vartheta}\big)\Big) = 0.7768\ldots 
\end{align*}
Inserting these and the corresponding $\varrho$ and $\eta$ into \eqref{eq:a3C323l1} yields, respectively, 
\begin{equation}\label{eq:C323l1Lf2} 
	a_3 = 1.0739\ldots \qquad a_3 = 1.1958\ldots \qquad a_3 = 1.1067\ldots 
\end{equation}

\subsection*{The case $l=0$} Here we get
\[g(z) = (1+\overline{\alpha_1}z)(1+\overline{\alpha_2}z)(1+\overline{\alpha_3}z) = \overline{\alpha}\,z^3 + \overline{\gamma}\,z^2 + \overline{\beta}\,z +1.\]
There are three rotations, $e^{i\theta}$, $e^{i(\theta+\pi/3)}$ and $e^{i(\theta+2\pi/3)}$ such that $\alpha=\alpha_1\alpha_2\alpha_3\geq0$. The equation \eqref{eq:C323eq} takes the form:
\[\lambda\alpha = 1 \qquad \lambda \overline{\gamma} = 2\beta \qquad \lambda \overline{\beta} = \beta^2+2\gamma \qquad \lambda = 2\left(\beta\gamma+\alpha\right)\]
The first equation shows that $\alpha>0$. We insert it into the others and obtain: 
\begin{align}
	\overline{\gamma} &= 2\alpha \beta \label{eq:C323l01} \\
	\overline{\beta} &= \alpha\beta^2 + 2\alpha\gamma \label{eq:C323l02} \\
	1 &= 2 (\alpha \beta \gamma + \alpha^2) 
\label{eq:C323l03} \end{align}
Our goal is to show that $\beta$ (and hence $\gamma$) is real. We begin with \eqref{eq:C323l02}. Inserting the conjugate of \eqref{eq:C323l01}, multiplying with $\beta$ and applying \eqref{eq:C323l03} yields
\[\alpha \beta^2 = \frac{\gamma}{2\alpha} - 2 \alpha \gamma = \gamma \left(\frac{1}{2\alpha}-2\alpha\right) \qquad\implies\qquad \alpha \beta^3 = \frac{1-2\alpha^2}{2\alpha}\left(\frac{1}{2\alpha}-2\alpha\right).\]
Hence $\beta^3$ is real, so we may choose a rotation above to ensure that $\beta$ is real. Note now that $\beta =0$ if and only if $\gamma=0$, which leads to the extremal \eqref{eq:C1pextremal} for $C(1,2/3)$ with the argument cubed. Hence we assume $\beta\neq0$. Since know that $\beta$ and $\gamma$ are real and non-zero, we insert \eqref{eq:C323l01} into \eqref{eq:C323l02} to obtain that
\[\beta = \alpha \beta^2 + 4\alpha^2 \beta \qquad \implies \qquad\beta = \frac{1-4\alpha^2}{\alpha} \qquad \implies \qquad \gamma =2-8\alpha^2,\]
where we used \eqref{eq:C323l01} again for the second implication. Inserting the values for $\beta$ and $\gamma$ into \eqref{eq:C323l03} yields the equation $1 = 2(2(1-4\alpha^2)^2+\alpha^2)$. Since $\alpha>0$ there are only two solutions:
\[\alpha = \frac{\sqrt{15\pm \sqrt{33}}}{8} \qquad \beta = \mp \frac{\sqrt{3\mp\frac{1}{3}\sqrt{33}}}{2} \qquad \gamma =\frac{1\mp \sqrt{33}}{8}.\]
Recalling that $\lambda = \alpha^{-1}$, we get from \eqref{eq:C323Lf} that the normalized candidate extremal function $f$ satisfies 
\begin{equation}\label{eq:C323l0Lf} 
	a_3 = L_3(f) = \frac{1}{\alpha}\left(1+\beta^2+\gamma^2+\alpha^2\right)^{-1/2} = \sqrt{\frac{2\left(1103\mp33\sqrt{33}\,\right)}{1153}}. 
\end{equation}
To maximize this, we choose the negative sign in the expression for $\alpha$, which yields that $\beta,\gamma>0$ and the value $a_3 = 1.4973\ldots$ in \eqref{eq:C323l0Lf}.

\subsection*{Final part in the proof of Theorem~\ref{thm:C323}} We need to compare the candidate extremal functions from the equations $l=0,1,2,3$. Clearly $a_3=1$ from $l=3$ can be discarded at once. Comparing \eqref{eq:C323l2Lf}, \eqref{eq:C323l1Lf1}, \eqref{eq:C323l1Lf2} and \eqref{eq:C323l0Lf} we find that the latter is the largest. Hence the case $l=0$ provides the extremal function so that
\[C(3,2/3) = \sqrt{\frac{2\left(1103+33\sqrt{33}\,\right)}{1153}}.\]
In the case $l=0$ we have $g(z)=h(z)=1+\beta z + \gamma z^2 + \alpha z^3$, so a computation yields the stated extremal function. \qed

\section{Concluding remarks} \label{sec:remarks}

\subsection{} Our first observation is that neither the extremal for $C(1,p)$ from \eqref{eq:C1pextremal} nor the extremals for $C(2,p)$ and $C(3,2/3)$ from Theorem~\ref{thm:C2p} and Theorem~\ref{thm:C323}, respectively, vanish in $\mathbb{D}$. This is of course a consequence of the fact that the extremals in each case stem from the case $l=0$ in Lemma~\ref{lem:structure}.
\begin{conjecture}\label{conj:nonvanish} 
	For $0<p<1$ any extremal $f$ for $C(k,p)$ does not vanish in $\mathbb{D}$. 
\end{conjecture}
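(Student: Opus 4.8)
By the remark following \eqref{eq:structure}, an extremal arising from Lemma~\ref{lem:structure} vanishes in $\mathbb{D}$ if and only if $l\ge 1$, so the conjecture is equivalent to the assertion that, for every $k\ge 1$ and every $0<p<1$, the value of $a_k$ produced by the branch $l=0$ strictly exceeds that produced by every branch with $1\le l\le k$. The plan is therefore to run the scheme already used in the proofs of Theorems~\ref{thm:C2p} and~\ref{thm:C323} for general $k$: for each $l$, the optimality relation \eqref{eq:flipeq} together with \eqref{eq:cprod} reduces a candidate to a finite system in the elementary symmetric functions of $\alpha_1,\dots,\alpha_k$, and it remains to compare the extremal values across $l$. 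It is convenient to record (as one checks from \eqref{eq:cprod} and the Cauchy--Schwarz inequality) that at any such candidate, once normalised by $\|h\|_{H^2}=1$, one has $a_k=\big(\sum_{j=0}^k|c_j|^2\big)^{1/2}$, where $c_j$ is the $j$-th Taylor coefficient of $h^{2/p-1}$. The comparison is thus one of truncated $\ell^2$-norms across admissible configurations, and relaxing the compatibility $|g|=|h|$ so that $g$ ranges freely over $H^2$ gives the clean upper bound $C(k,p)\le \sup_{h}\big(\sum_{j=0}^k|c_j(h)|^2\big)^{1/2}$; the substance of the conjecture is that this relaxed supremum is attained by a non-vanishing $h$ whose associated optimal $g$ is automatically compatible.

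The conceptual heart of the plan is a reduction lemma asserting that a zero inside $\mathbb{D}$ can be traded for a strict gain. Starting from an $l\ge 1$ candidate $(g,h)$ satisfying \eqref{eq:flipeq}, I would reflect one inner zero $-\alpha_j$ of $g$ across $\mathbb{T}$; since $|z+\alpha_j|=|1+\overline{\alpha_j}z|$ on $\mathbb{T}$, this preserves both $\|g\|_{H^2}=\|h\|_{H^2}$ and the compatibility $|g|=|h|$, but it destroys the Cauchy--Schwarz optimality expressed by \eqref{eq:flipeq}. One must therefore re-optimise, passing to the \eqref{eq:flipeq}-solution associated with the reflected configuration, and show that the re-optimised value of $\big(\sum_{j=0}^k|c_j|^2\big)^{1/2}$ strictly increases. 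Iterating this reduction down to $l=0$ would identify the extremal and prove the conjecture.

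The main obstacle is precisely the coupling enforced by \eqref{eq:flipeq}: reflecting a zero of $g$ is not a monotone operation on a single analytic function, because the relation ties $g$ to $h$, so the target coefficients $c_j=c_j(h)$ themselves move when $g$ is re-optimised after the reflection. This coupled variation is exactly what currently forces the case-by-case resolution of the symmetric-function system for small $k$. To gain control I would first treat the regime $p\to 1^-$, where $q=2/p-1\to 1$ and all branches degenerate to the common value $a_k=1$ of the $p=1$ problem (recall that for $p=1$ every function of the form \eqref{eq:structure} is extremal); a first-order expansion in $q-1$ should reveal that the branch $l=0$ has the fastest growth of $a_k$ as $p$ decreases. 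Extending this from a neighbourhood of $p=1$ to the whole range $0<p<1$ would, I expect, require a no-crossing argument for the branch values viewed as functions of $p$ --- for instance a differential inequality of the type used to control $\Phi$ and $\Psi$ in the final part of the proof of Theorem~\ref{thm:C2p} --- and establishing such an inequality uniformly in $k$ and $l$ is the step I expect to be genuinely hard.
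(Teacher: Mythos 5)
The statement you are addressing is labelled a \emph{conjecture} in the paper, and the paper offers no proof of it; the authors explicitly write that apart from the cases $k=1,2$ (all $p$) and $k=3$, $p=2/3$, they ``have little concrete evidence for the conjecture.'' Your proposal does not close this gap: it is a research programme whose two load-bearing steps are both left unproven. The first is the ``reduction lemma'' asserting that reflecting an inner zero $-\alpha_j$ of $g$ across $\mathbb{T}$ and then re-optimising via \eqref{eq:flipeq} strictly increases the candidate value of $a_k$. You correctly identify the obstruction --- the relation \eqref{eq:flipeq} couples $g$ to $h$, so the coefficients $c_j(h)$ move when you re-optimise --- but you offer no mechanism to overcome it. Note that even for $k=2$ the paper's comparison of the $l=1$ and $l=0$ branches is not a soft monotonicity statement: it requires the explicit solution \eqref{eq:C2pl1alphas} of the $l=1$ system and a genuine calculus argument (the functions $\Phi$ and $\Psi$) to verify the inequality, and for $k=3$ the $l=1$ branch alone produces five real solutions that must each be evaluated numerically. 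A general-$k$ reduction lemma would have to dominate all such solutions simultaneously, and nothing in your sketch suggests how.

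The second unproven step is the perturbative analysis near $p=1$ together with the ``no-crossing'' differential inequality in $p$. At $p=1$ every function of the form \eqref{eq:structure} is extremal and all branches collapse to the common value $a_k=1$, so the degenerate limit by itself distinguishes nothing; everything rests on the first-order expansion in $q-1$, which you do not compute, and on extending the resulting strict ordering from a neighbourhood of $p=1$ to all of $0<p<1$, which you acknowledge is ``the step I expect to be genuinely hard.'' One smaller point: your identity $a_k=\bigl(\sum_{j=0}^k |c_j|^2\bigr)^{1/2}$ at a normalised \eqref{eq:flipeq}-solution is consistent with \eqref{eq:cprod}, and the equivalence ``$f$ vanishes in $\mathbb{D}$ iff $l\geq 1$'' does follow from \eqref{eq:structure}; so the framing is sound. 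But framing an open problem correctly is not proving it, and as written the proposal establishes nothing beyond what the paper already contains.
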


If we a priori knew that Conjecture~\ref{conj:nonvanish} held, it would significantly decrease the effort needed to prove Theorem~\ref{thm:C2p} and Theorem~\ref{thm:C323}, since it would be sufficient to consider only the case $l=0$. Apart from the above-mentioned examples we have little concrete evidence for the conjecture. However, the following weaker statement could be a starting point. 
\begin{conjecture}\label{conj:strict} 
	For $0<p<1$ the sequence $C(k,p)$ is strictly increasing. 
\end{conjecture}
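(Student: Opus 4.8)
The plan is to separate the elementary non-strict monotonicity from the strictness, where the real difficulty lies. For the non-strict bound, recall that multiplication by $z$ is an isometry of $H^p$, since $|e^{i\theta}f(e^{i\theta})|=|f(e^{i\theta})|$ almost everywhere on $\mathbb{T}$. If $f(z)=\sum_{n\geq0}a_nz^n$ is a normalized extremal for $L_k$, so that $a_k=C(k,p)$, then $zf$ has unit norm and its coefficient of $z^{k+1}$ equals $a_k=C(k,p)$. Hence $C(k+1,p)\geq C(k,p)$ for every $k\geq1$, and the content of the conjecture is precisely that this inequality is never an equality.

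To rule out equality I would argue by contradiction. Suppose $C(k+1,p)=C(k,p)$. Then $zf$ attains the supremum defining $C(k+1,p)$, so $zf$ is itself an extremal for $L_{k+1}$. Writing the factorization \eqref{eq:innerouter} of $f$ as $f=gh^{2/p-1}$ with $\|g\|_{H^2}=\|h\|_{H^2}=1$ and $h$ outer, multiplication by $z$ only introduces an inner factor, so the corresponding factorization of $zf$ is $(zg)h^{2/p-1}$ with the \emph{same} outer part $h$. Since $zf$ is a normalized extremal, the necessary relation \eqref{eq:flipeq} applies and furnishes a constant $\lambda$ with
\[\lambda\,z^{k+1}\,(zg)(z^{-1}) = \overline{h^{2/p-1}(\overline{z})} + O(z^{k+2}).\]
As $(zg)(z^{-1})=z^{-1}g(z^{-1})$, the left-hand side equals $\lambda z^{k}g(z^{-1})$, a polynomial of degree at most $k$ whose coefficient of $z^{k+1}$ vanishes. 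Writing $h^{2/p-1}(z)=\sum_{n\geq0}c_nz^n$, comparison of the coefficients of $z^{k+1}$ forces $\overline{c_{k+1}}=0$. Thus equality would require the $(k+1)$-st Taylor coefficient of $h^{2/p-1}$ to vanish, and it would suffice to prove that $c_{k+1}\neq0$ for the outer part $h$ of every extremal of $L_k$.

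This reduction is readily checked in the cases already settled. There the extremal comes from $l=0$, so that $g=h$ and $f=h^{2/p}$ with $h(z)=\prod_{j=1}^k(1+\overline{\alpha_j}z)$ a polynomial not vanishing in $\mathbb{D}$; one verifies directly from \eqref{eq:C1pextremal} and the explicit extremals of Theorems~\ref{thm:C2p} and \ref{thm:C323} that the relevant coefficient is nonzero, recovering $C(2,p)>C(1,p)$ and $C(4,2/3)>C(3,2/3)$. Conditionally on Conjecture~\ref{conj:nonvanish} the general picture is the same: $f=h^{2/p}$ with $h$ a nonvanishing polynomial of degree $k$, and $h^{2/p-1}=f^{1-p/2}$, so the task becomes showing that the $(k+1)$-st coefficient of $f^{1-p/2}$ does not vanish.

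The main obstacle is exactly this non-vanishing. For general $k$ the extremal, and hence $h$, is unknown, and $c_{k+1}$ is a coefficient of a non-integer power of a polynomial: writing $h^{q}=\exp\big(q\sum_{m\geq1}\frac{(-1)^{m-1}}{m}s_m z^m\big)$ with $q=2/p-1$ and $s_m=\sum_j\overline{\alpha_j}^{\,m}$ the power sums of the zeros, the quantity $c_{k+1}$ is a universal polynomial in $q$ and in the $s_m$ with vanishing constant term in $q$. For a fixed configuration of zeros this polynomial has only finitely many roots in $q$, but the zeros $\alpha_j$ of the extremal themselves vary with $p$, so an accidental cancellation cannot be excluded by this count alone. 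Proving that no such cancellation occurs for the extremal configuration---presumably by exploiting the extra rigidity from the extremality relations $b_j=\lambda^{-1}\overline{c_{k-j}}$ in \eqref{eq:flipeq} together with $\|g\|_{H^2}=\|h\|_{H^2}$, or from a positivity property of the coefficients of $h^{q}$---is the crux, and is what keeps the statement at the level of a conjecture.
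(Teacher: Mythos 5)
This statement is one of the paper's \emph{conjectures}: the paper does not prove it, and offers only the elementary observation that strict monotonicity is equivalent to the assertion that no extremal for $C(k+1,p)$ vanishes at the origin (multiply an extremal for $L_k$ by $z$ to pass from equality to a vanishing extremal, and divide by $z$ for the converse). Your proposal is likewise not a proof, as you yourself acknowledge. What you do prove is correct: the non-strict inequality $C(k+1,p)\geq C(k,p)$ via the isometry $f\mapsto zf$ is exactly the paper's argument in one direction of its equivalence; and your further reduction is sound --- if $C(k+1,p)=C(k,p)$ then $zf$ is extremal for $L_{k+1}$ with factorization $(zg)h^{2/p-1}$ (the outer part is unchanged since $z$ is inner), the left-hand side of \eqref{eq:flipeq} for $L_{k+1}$ collapses to $\lambda z^{k}g(z^{-1})$, which by Lemma~\ref{lem:structure} is a polynomial of degree at most $k$, and comparing coefficients of $z^{k+1}$ forces $c_{k+1}=0$. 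Two small remarks: it suffices to exhibit \emph{one} extremal of $L_k$ with $c_{k+1}\neq0$, not to verify this for every extremal; and your suggested ``positivity of the coefficients of $h^{q}$'' is not automatic, since $\binom{q}{m}$ changes sign for $m>q$ even when $h$ has positive coefficients --- in the settled cases one instead checks the single coefficient $c_{k+1}$ by hand.

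The genuine gap is the one you name at the end: for general $k$ the extremal $h$ is unknown, and nothing in the extremality relations is currently known to exclude the accidental vanishing of $c_{k+1}$. Your coefficient criterion is a sufficient condition for strictness, whereas the paper's reformulation (no extremal of $L_{k+1}$ vanishes at the origin) is an exact equivalence and sits naturally as the weakest special case of Conjecture~\ref{conj:nonvanish}; neither reduction is presently closable, which is precisely why the statement remains a conjecture. In short, your write-up is an honest and correct partial analysis rather than a proof, and the missing step coincides with the open problem itself.
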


Conjecture~\ref{conj:strict} is equivalent to the following statement: For $0<p<1$ any extremal for $C(k,p)$ does not vanish at the origin. Indeed, if $C(k,p)=C(k+1,p)$ for some $k\geq1$ then we can multiply an extremal for $C(k,p)$ with $z$ to obtain an extremal for $C(k+1,p)$ vanishing at the origin. Conversely, if an extremal for $C(k+1,p)$ vanishes at the origin, then we find that $C(k,p)=C(k+1,p)$ by dividing the extremal by $z$. Note that this is precisely how the extremals $f(z)=z^k$ can be obtained in the range $1\leq p<\infty$, where it holds that $C(k,p)=1$ for every $k$.

\subsection{} Let $N_p$ denote the subset of $H^p$ consisting of the elements $f$ which do not vanish in $\mathbb{D}$. Suffridge \cite{Suffridge90} investigated the extremal problem
\[\widetilde{C}(k,p) = \sup_{f\in N_p} \left\{\mre{\frac{f^{(k)}(0)}{k!}}\,:\, \|f\|_{H^p}=1\right\}.\]
Clearly it holds that $\widetilde{C}(k,p) \leq C(k,p)$. By Lemma~\ref{lem:structure} (see also \cite[p.~143]{Duren}) this is an equality when $p=1$. For $1<p<\infty$ this inequality is strict, by the strict convexity of $H^p$ and the fact that $f(z)=z^k$ are not in $N^p$.

Note that Conjecture~\ref{conj:nonvanish} is equivalent to the claim $\widetilde{C}(k,p)=C(k,p)$ for $0<p<1$ and $k\geq1$. In particular, we observe that \cite[Thm.~4.1]{BBSS} and Theorem~\ref{thm:C2p} extend the statements for $0<p<1$ in \cite[Thm.~2]{Suffridge90} and \cite[Thm.~7]{Suffridge90}, respectively.

The approach employed in \cite{Suffridge90} to study $\widetilde{C}(k,p)$ is related to the approach of the present paper to study $C(k,p)$. The difference is that the version of Lemma~\ref{lem:structure} for $N_p$ does not contain a Blaschke product, but instead contains a singular inner function. It is conjectured on \cite[p.~187]{Suffridge90} that this singular inner function is trivial when $0<p<1$. This conjecture is evidently a consequence of Conjecture~\ref{conj:nonvanish} in view of Lemma~\ref{lem:structure}.

\subsection{} Fix $0\leq r \leq 1$ and let $H^p_r$ denote the subset of $H^p$ consisting of the elements $f$ for which $|f(0)|=r$. For $k\geq1$, consider the extremal problem 
\[C_r(k,p) = \sup_{f \in H^p_r} \left\{\mre{\frac{f^{(k)}(0)}{k!}}\,:\, \|f\|_{H^p}=1 \right\}.\]
This extremal problem was solved by Beneteau and Korenblum \cite{BK04} in the range $1\leq p < \infty$ as follows. They first demonstrate that $C_r(k,p)=C_r(1,p)$ holds for every $k\geq1$ using F.~Wiener's trick, which relies on the triangle inequality. Following this, they solve the extremal problem directly in the case $k=1$ using the factorization $f = BF$ similarly to how we used the factorization $f = gh^{2/p-1}$ above. Inspecting the solution, it is easy to verify that the function $r \mapsto C_r(k,p)$ is decreasing from $C_0(k,p)=1$ to $C_1(k,p)=0$. 

We make a couple of comments on this extremal problem in the range $0<p<1$. Since the triangle inequality here takes the form
\[\|f+g\|_{H^p}^p \leq \|f\|_{H^p}^p + \|g\|_{H^p}^p,\]
we find by F.~Wiener's trick that $C_r(k,p) \leq k^{1/p-1}C_r(1,p)$. This estimate should be compared with the Hardy--Littlewood estimate $C(k,p) \leq k^{1/p-1} C(1,p)$ mentioned in the introduction. The situation for $k=1$ is also different, since by \eqref{eq:C1pextremal} and \eqref{eq:C1p} we find that the maxima of the function $r \mapsto C_r(1,p)$ is in the range $0<p<1$ attained at $r = (1-p/2)^{1/p}$.

\subsection{} The dual space of $H^p$ with $0<p<1$, is (non-isometrically) identified in \cite{DRS69} through the embedding
\[\int_{\mathbb{D}} |f(z)| \big({\textstyle\frac{1}{p}}-1\big)\big(1-|z|^2\big)^{\frac{1}{p}-2}\,\frac{dA(z)}{\pi} \leq C_p \|f\|_{H^p},\]
where $dA$ denotes Lebesgue area measure and $C_p\geq1$. The embedding is, of course, also due to Hardy and Littlewood \cite{HL32}. It is conjectured (see e.g.~\cite[Sec.~2]{BOCSZ18}) that $C_p=1$ for every $0<p<1$, but this is known to hold only when $1/p$ is an integer. Assuming that this conjecture holds, we can obtain the estimate
\[C(k,p) \leq \left(2\big({\textstyle\frac{1}{p}}-1\big)\int_0^1 r^{k+1}\big(1-r^2\big)^{\frac{1}{p}-2}\,dr\right)^{-1} = \frac{\Gamma\big(\frac{k}{2}+\frac{1}{p}\big)}{\Gamma\big(\frac{k}{2}+1\big)\Gamma\big(\frac{1}{p}\big)}.\]
For comparison with Theorem~\ref{thm:C2p} and Theorem~\ref{thm:C323}, we record the special cases
\[C(2,p) \leq \frac{1}{p} \qquad \text{and} \qquad C(3,2/3) \leq \frac{16}{3\pi}=1.6976\ldots\]

\section*{Acknowledgements} The authors would like to extend their gratitude to Kristian Seip for several helpful conversations in the early phase of the project and to the anonymous referee for pointing out an inaccuracy in a draft of the paper and for helpful comments that increased the readability of the paper.

\bibliographystyle{amsplain} 
\bibliography{coeff} 

\providecommand{\bysame}{\leavevmode\hbox to3em{\hrulefill}\thinspace}
\providecommand{\MR}{\relax\ifhmode\unskip\space\fi MR }
\providecommand{\MRhref}[2]{%
  \href{http://www.ams.org/mathscinet-getitem?mr=#1}{#2}
}
\providecommand{\href}[2]{#2}
\begin{thebibliography}{10}

\bibitem{BK04}
Catherine Beneteau and Boris Korenblum, \emph{Some coefficient estimates for
  {$H^p$} functions}, Complex analysis and dynamical systems, Contemp. Math.,
  vol. 364, Amer. Math. Soc., Providence, RI, 2004, pp.~5--14.

\bibitem{BBSS}
Andriy Bondarenko, Ole~Fredrik Brevig, Eero Saksman, and Kristian Seip,
  \emph{Linear space properties of {$H^p$} spaces of {Dirichlet} series},
  Trans. Amer. Math. Soc. \textbf{372} (2019), no.~9, 6677--6702.

\bibitem{BOCSZ18}
Ole~Fredrik Brevig, Joaquim Ortega-Cerd\`a, Kristian Seip, and Jing Zhao,
  \emph{Contractive inequalities for {H}ardy spaces}, Funct. Approx. Comment.
  Math. \textbf{59} (2018), no.~1, 41--56.

\bibitem{DRS69}
P.~L. Duren, B.~W. Romberg, and A.~L. Shields, \emph{Linear functionals on
  {$H^{p}$} spaces with {$0<p<1$}}, J. Reine Angew. Math. \textbf{238} (1969),
  32--60.

\bibitem{Duren}
Peter~L. Duren, \emph{Theory of {$H^{p}$} spaces}, Pure and Applied
  Mathematics, Vol. 38, Academic Press, New York-London, 1970.

\bibitem{Fejer16}
Leopold Fej\'{e}r, \emph{\"{U}ber trigonometrische {P}olynome}, J. Reine Angew.
  Math. \textbf{146} (1916), 53--82.

\bibitem{HL32}
G.~H. Hardy and J.~E. Littlewood, \emph{Some properties of fractional
  integrals. {II}}, Math. Z. \textbf{34} (1932), no.~1, 403--439.

\bibitem{Havinson51}
S.~Ya. Havinson, \emph{On some extremal problems of the theory of analytic
  functions}, Moskov. Gos. Univ. U\v{c}enye Zapiski Matematika \textbf{148}
  (1951), no.~4, 133--143.

\bibitem{Kabaila60}
V.~Kabaila, \emph{On some interpolation problems in the class {$H_{p}$} for
  {$p<1$}}, Soviet Math. Dokl. \textbf{1} (1960), 690--692.

\bibitem{Khavinson86}
S.~Ya. Khavinson, \emph{Two papers on extremal problems in complex analysis},
  Amer. Math. Soc. Transl. (2) \textbf{129} (1986), 1--114.

\bibitem{MR50}
A.~J. Macintyre and W.~W. Rogosinski, \emph{Extremum problems in the theory of
  analytic functions}, Acta Math. \textbf{82} (1950), 275--325.

\bibitem{FRiesz19}
Friedrich Riesz, \emph{\"{U}ber {P}otenzreihen mit vorgeschriebenen
  {A}nfangsgliedern}, Acta Math. \textbf{42} (1920), no.~1, 145--171.

\bibitem{Suffridge90}
T.~J. Suffridge, \emph{Extremal problems for nonvanishing {$H^p$} functions},
  Computational methods and function theory ({V}alpara\'\i so, 1989), Lecture
  Notes in Math., vol. 1435, Springer, Berlin, 1990, pp.~177--190.

\end{thebibliography}
\end{document}